\newtheorem{theorem}{Theorem}[section]
\newtheorem{prop}[theorem]{Proposition}
\newtheorem{corol}[theorem]{Corollary}
\theoremstyle{definition}
\newtheorem{defin}[theorem]{Definition}
\newtheorem{exam}[theorem]{Example}
\theoremstyle{remark}
\numberwithin{equation}{section}
\def\iff{if and only if }
\def\*{\otimes}
\def\+{\oplus}
\def\xx{\times}
\def\ol{\overline}
\def\bl{\textup{(}}	\def\br{\textup{)}}
\def\xarr{\xrightarrow}
\def\8{\infty}
\def\hom{\mathop\mathrm{Hom}\nolimits}
\def\hos{\mathop\mathrm{Hos}\nolimits}
\def\hot{\mathop\mathrm{Hot}\nolimits}
\def\ob{\mathop\mathrm{Ob}\nolimits}
\def\im{\mathop\mathrm{Im}\nolimits}
\def\tors{\mathop\mathrm{tors}\nolimits}
\def\rE{\mathrm E}		\def\rF{\mathrm F}
\def\mZ{\mathbb Z}
\def\mN{\mathbb N}
\def\mQ{\mathbb Q}
\def\ga{\gamma}
\def\la{\lambda}
\def\Ga{\Gamma}
\def\al{\alpha}		
\def\be{\beta}		
\def\De{\Delta} 
\def\la{\lambda}	
\def\vi{\varphi}
\def\La{\Lambda}
\def\cA{\mathscr A}		\def\cC{\mathscr C}	
\def\cB{\mathscr B}		\def\cM{\mathscr M}
\def\cS{\mathscr S}		
\def\cE{\mathscr E}		\def\cI{\mathscr I}		
\def\cJ{\mathscr J}		
\def\kM{\mathcal M}		\def\kN{\mathcal N}	
\def\dE{\mathfrak E}	\def\dF{\mathfrak F}
\def\dX{\mathfrak X}	
\def\bC{\mathbf C}		\def\bA{\mathbf A}
\def\bE{\mathbf E}		\def\bB{\mathbf B}
\def\mno{\mN\cup\{0\}}
\def\mni{\mN\cup\{\8\}}
\def\oS{\ol\cS}
\def\Ab{\mathsf{Ab}}	\def\op{^\mathrm{op}}
\def\set#1{\left\{\,#1\,\right\}}
\def\setsuch#1#2{\left\{\,#1\mid #2\,\right\}}
\def\lst#1#2{ #1_1 , #1_2 , \dots , #1_{#2} }
\def\mtr#1{\begin{pmatrix}#1\end{pmatrix}}
\def\cel{*=0{\bullet}}
\def\sb{\subset}	
	\def\spe{\supseteq}
	\def\df{\mbox{-}}
\def\bop{\bigoplus}
\def\arb{\ar@{<-}}	\def\oo{{\ 0\ }}
\begin{document}

\title[$p$-localization of stable homotopy category]%
{Atoms in the $p$-localization of stable homotopy category}
\author[Y. Drozd]{Yuriy A. Drozd} 
\author[P. Kolesnyk]{Petro O. Kolesnyk}
\address{Institute of Mathematics, National Academy of Sciences of Ukraine,
01601 Kyiv, Ukraine}
\email{y.a.drozd@gmail.com, drozd@imath.kiev.ua}
\urladdr{www.imath.kiev.ua/$\sim$drozd}
\email{iskorosk@gmail.com}
\subjclass[2010]{55P42, 55P60, 55P10}

\begin{abstract}
We study $p$-localizations, where $p$ is an odd prime, of the full subcategories $\cS^n$ of stable 
homotopy category consisting of CW-complexes having cells in $n$ successive dimensions. 
Using the technique of triangulated categories and matrix problems we classify
atoms (indecomposable objects) in $\cS_p^n$ for $n\le4(p-1)$ and show that for $n>4(p-1)$
such classification is wild in the sense of the representation theory.
\end{abstract}

\maketitle
\tableofcontents 

\section*{Introduction}

 Classification of homotopy types of polyhedra (finite CW-complexes) is an old problem. It is well-known
 that it becomes essentially simpler if we consider the stable situation, i.e. identify two polyhedra
 having homotopy equivalent (iterated) suspensions. It leads to the notion of stable homotopy category
 and stable homotopy equivalence. Such a classification has been made for polyhedra of low dimensions
 by several authors; a good survey of these results is the paper of Baues \cite{ba}. Unfortunately, it
 cannot be done for higher dimensions, since the problem becomes extremely complicated. Actually, it
 results in ``\emph{wild problems}'' of the representation theory, i.e. problems containing 
 classification of representations of all finitely generated algebras over a field (cf. \cite{bd,d1,d2}; 
 for generalities about wild problems see the survey \cite{d0}). 
 
 In the survey \cite{d1} the first author proposed a new approach to the stable homotopy classification
 which seems more ``algebraic'' and simpler for calculations. It is based on the triangulated structure
 of the stable homotopy category and uses the technique of ``matrix problems'', more exactly, 
 \emph{bimodule categories} in the sense of \cite{d0}. In particular, it gave simplified proofs of the 
 results of \cite{bh,bd,bdf}. In \cite{d2} this technique gave new results on classification of polyhedra
 with torsion free homologies.
 
 The main difficulties in the stable homotopy classification are related to the $2$-components of
 homotopy groups. That is why it is natural to study \emph{$p$-local polyhedra}, 
 where $p$ is an odd prime;
 then we only use the $p$-parts of homotopy groups. In this paper we use the technique of
 \cite{d1,d2} to classify $p$-local polyhedra that only have cells in $n$ successive dimensions for
 $n\le4(p-1)$. Analogous results have been obtained by Henn \cite{henn}, who used a different
 approach. Our description seems more straightforward and more visual. It gives explicit construction
 of polyhedra by successive attaching simpler polyhedra to each other. We also show that for $n>4(p-1)$
 the stable classification of $p$-local polyhedra becomes a wild problem, so the obtained results
 are in some sense closing.
 
 Section 1 covers the main notions from the stable homotopy theory, bimodule categories and their
 relations. In Section 2 we calculate morphisms between Moore polyhedra and their products.
 In Section 3 we describe polyhedra in the case $n=2p-1$. This classification happens to be 
 ``\emph{essentially finite}'' in the sense that there is an upper bound for the number of cells
 in indecomposable polyhedra (\emph{atoms}); actually, atoms have at most 4 cells. 
 Section 4 is the main one. Here we describe polyhedra for $2p\le n\le4(p-1)$. The result is
 presented in terms of \emph{strings and bands}, which is usual in the modern representation 
 theory. String and band polyhedra are defined by some combinatorial invariant (a \emph{word})
 and, in band case, an irreducible polynomial over the residue field $\mZ/p$. In the
 representation theory such description is said to be \emph{tame}. Finally, in Section 5 we prove
 that the classification becomes wild if $n>4(p-1)$. 
 
 The description obtained by matrix methods is
 \emph{local}, just as that of \cite{henn}. Using the results of \cite{dk} we also obtain a global
 description of $p$-primary polyhedra. Fortunately, it almost coincides with the local one, except rare 
 special cases, when one local object gives rise to $(p-1)/2$ global ones.
  
 The first author expresses his thanks to H.-J.~Baues, who introduced him into the world of algebraic
 topology and was his co-author in several first papers on this topic.
 
\section{Stable homotopy category and bimodule categories}
\label{s1} 

 We use basic definitions and facts concerning stable homotopy from \cite{co}. We denote by 
 $\cS$ the stable homotopy category of \emph{polyhedra}, i.e. finite CW-complexes. It is an additive
 category and the morphism groups in it are $\hos(X,Y)=\varinjlim_k\hot(X[k],Y[k])$, where $X[k]$ denotes
 the $k$-fold suspension of $X$ and $\hot(X,Y)$ denotes the set of homotopy classes of continuous
 maps $X\to Y$. Note that the direct sum in this category is the wedge (bouquet, or one-point gluing) 
 $X\vee Y$ and the natural map $\hos(X,Y)\to\hos(X[k],Y[k])$ is an isomorphism. 
 In what follows, we always deal with polyhedra as the objects of this category.
 In particular, \emph{isomorphism} means stable homotopy equivalence. Note that all groups
 $\hos(X,Y)$ are finitely generated and the stable homotopy groups $\pi^S_n(X)=\hos(S^n,X)$ are
 torsion if $n>\dim X$. 
 It is convenient to formally add to $\cS$ the ``negative shifts'' $X[-k]$ ($k\in\mN$)
 of polyhedra with the natural sets of morphisms, so that $X[k][l]\simeq X[k+l]$ and
 $\hos(X[k],Y[k])\simeq\hos(X,Y)$ for all $k\in\mZ$. Then $\cS$ becomes a \emph{triangulated category}, 
 where the suspension plays role of the shift and the exact triangles are cofibre sequences 
 $X\to Y\to Z\to X[1]$ (in $\cS$ they are the same as fibre sequences). From now on we consider
 $\cS$ with these additional objects. Actually, the category obtained in this way is equivalent to
 the category of finite $S$-spectra \cite{co,sw}.
   
 We denote by $\cS^n$ the full subcategory of $\cS$ whose objects are the shifts $X[k]$ ($k\in\mZ$)
 of polyhedra only having cells in at most $n$ successive dimensions, or, the same, $(m-1)$-connected 
 and of dimension at most $n+m$ for some $m$. The Freudenthal Theorem \cite[Theorem 1.21]{co} implies 
 that every object of $\cS^n$ is a shift (iterated suspension) of an $n$-connected polyhedron of 
 dimension at most $2n-1$. We denote the full subcategory of $\cS^n$ consisting of such polyhedra by 
 $\oS^n$. Moreover, if two such polyhedra are isomorphic in $\cS$, they are homotopy equivalent. 
 Following Baues \cite{ba}, we call an object from $\cS^n$ an \emph{atom} if it belongs to $\oS^n$, 
 does not belong to $\cS^{n-1}$ and is indecomposable (into a wedge of non-contractible polyhedra).
 
 Recall that the \emph{$p$-localization} of an additive category $\cC$ is the category $\cC_p$ such that
 $\ob\cC_p=\ob\cC$ and $\hom_{\cC_p}(A,B)=\mZ_p\*\hom_\cC(A,B)$, where $\mZ_p\sb\mQ$ is the subring
 $\setsuch{\frac ab}{a,b\in\mZ,\,p\nmid b}$. We consider the localized categories $\cS_p$ and $\cS^n_p$
 and denote their groups of morphisms $X\to Y$ by $\hos_p(X,Y)$. Actually, $\cS_p$ coincides with the 
 stable homotopy category of finite \emph{$p$-local CW-complexes} in the sense of \cite{su}. Every such
 space can be considered an image in $\cS_p$ of a \emph{$p$-primary polyhedron}, i.e. such polyhedron $X$
 that the map $p^k1_X$ for some $k$ can be factored through a wedge of spheres \cite{co}.
 
 To study the categories $\cS_p^n$ we use the technique of \emph{bimodule categories}, like in 
 \cite{d2}. We recall the corresponding notions. 
 
 \begin{defin}[cf. {\cite[Section 4]{d0}}]\label{11}
 Let $\cA$ and $\cB$ be additive categories, $\cM$ be an $\cA\df\cB$-bimodule, i.e. a biadditive functor
 $\cA\op\xx\cB\to\Ab$ (the category of abelian groups). The \emph{bimodule category} $\cE(\cM)$ (or the
 category of elements of $\cM$) is defined as follows.
 \begin{itemize}
\item
 $\ob\cE(\cM)=\bigcup_{\substack{A\in\ob\cA\\B\in\ob\cB}}\cM(A,B)$.
 
 \smallskip
\item
If $u\in\cM(A,B),\,v\in\cM(A',B')$, then 
 $$\hom_{\cE(\cM)}(u,v)=\setsuch{(f,g)}{f:A'\to A,\,g:B\to B',\,gu=fv}.$$
 \end{itemize}
 $\cE(\cM)$ is also an additive category. Note that we only consider \emph{bipartite bimodules} in the
 sense of \cite{d0}.
 \end{defin}
 
 Usually we choose a set of \emph{additive generators} of $\cA$ and $\cB$, i.e. sets
 $\set{\lst As}\sb\ob\cA$ and $\set{\lst Br}\sb\ob\cB$ such that every object from $\cA$ (respectively,
 from $\cB$) is isomorphic to a direct sum $\bop_{j=1}^sk_jA_j$ (respectively, $\bop_{i=1}^rl_iB_i$).
 Then an object of $\cE(\cM)$ can be presented as a block matrix $F=(F_{ij})$, where $F_{ij}$ is a matrix
 of size $l_i\xx k_j$ with coefficients from $\cM(A_j,B_i)$. If we present morphisms in the analogous
 matrix form, the action of morphisms on elements from $\cM$ is presented by the usual matrix
 multiplication. 
 
 We use the following localized version of \cite[Theorem 2.2]{d2}.
 
 \begin{theorem}\label{12} 
 Let $n\le m<2n-1$. Denote by $\cA$ \bl respectively, by $\cB$\br\ the full subcategory of
 $\cS_p$ consisting of $(m-1)$-connected polyhedra of dimension at most $2n-2$ \bl respectively, of
 $(n-1)$-connected polyhedra of dimension at most $m$\br. Consider the $\cA\df\cB$-bimodule
 $\cM$ such that $\cM(A,B)=\hos_p(A,B)$. Let $\cI$ be the ideal of the category $\cE(\cM)$
 consisting of all morphisms $(\al,\be):f\to f'$ such that $\al$ factors through $f$ and $\be$ factors 
 through $f'$. Let also $\cJ$ be the ideal of $\oS_p^n$ consisting of all maps $f:X\to Y$ such that $f$
 factors both through an object from $\cA[1]$ and through an object from $\cB$. The map
 $f\mapsto Cf$ \bl the \emph{cone} of $f$\br\ induces an equivalence 
 $\cE(\cM)/\cI\simeq\oS_p^n/\cJ$. 
 Moreover, $\cJ^2=0$, hence the isomorphism classes of the categories $\oS_p^n$ and $\oS_p^n/\cJ$ are 
 the same. 
 
 \emph{Note also that all groups $\cJ(X,Y)$ are finite \cite[Corollary 1.10]{dk}}.
 \end{theorem}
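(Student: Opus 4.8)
The plan is to transfer the proof of the integral statement \cite[Theorem 2.2]{d2} to the $p$-local setting, the main point being that $p$-localization changes nothing essential. Since $\mZ_p$ is flat over $\mZ$, the functor $\cS\to\cS_p$ is exact: it carries cofibre sequences to cofibre sequences and, up to isomorphism, commutes with the cone construction, for the cone of $\frac1b u_0$ with $p\nmid b$ is isomorphic to the cone of $u_0$. Thus $\cM$, its category of elements $\cE(\cM)$ and the ideal $\cI$ are the $p$-localizations of their integral analogues. I would first check that $f\mapsto Cf$ is well defined as a functor into $\oS_p^n/\cJ$. For $u\in\cM(A,B)=\hos_p(A,B)$ the exact triangle $A\xarr{u}B\to Cu\to A[1]$ exhibits $Cu$ as glued from $B$ and $A[1]$; the hypotheses that $B$ is $(n-1)$-connected of dimension at most $m$, that $A$ is $(m-1)$-connected of dimension at most $2n-2$, and the inequalities $n\le m<2n-1$ force $Cu$ to be $(n-1)$-connected of dimension at most $2n-1$, hence, after the normalization furnished by the Freudenthal Theorem, an object of $\oS_p^n$. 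A morphism $(\al,\be):u\to v$ induces, by the axiom TR3, a morphism $Cu\to Cv$; any two such lifts differ by a map factoring through both $A[1]$ and $B'$, hence lying in $\cJ$, so the assignment is a well-defined functor $\cE(\cM)\to\oS_p^n/\cJ$.

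Next I would show that this functor is dense with kernel exactly $\cI$, so that it descends to the asserted equivalence. Density rests on the cellular filtration of an object $X\in\oS_p^n$: cutting $X$ along dimension $m$ produces a subobject $B\in\cB$ carrying the cells in dimensions at most $m$ and a cofibre $X/B\simeq A[1]$ with $A\in\cA$; rotating the cofibre sequence $B\to X\to A[1]\to B[1]$ identifies $X$ with $Cu$ for the connecting morphism $u\in\cM(A,B)$, the connectivity gap $m<2n-1$ guaranteeing that the cells split cleanly at $m$. Fullness and the identification of the kernel come from chasing the two triangles of $u$ and $v$ through the long exact sequences $\hos_p(-,Cv)$ and $\hos_p(Cu,-)$: every map $Cu\to Cv$ arises, modulo $\cJ$, from some pair $(\al,\be)$ because the obstruction groups vanish in the range $n\le m<2n-1$, and such a pair induces the zero map modulo $\cJ$ precisely when $\al$ factors through $u$ and $\be$ through $v$, that is, when $(\al,\be)\in\cI$. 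The octahedral axiom is the convenient device for comparing the two cones.

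It remains to establish $\cJ^2=0$ and to deduce the statement on isomorphism classes. A map in $\cJ$ factors both through an object of $\cA[1]$ and through an object of $\cB$; composing two such maps therefore produces a morphism factoring through a composite $B\to A[1]$ with $B\in\cB$, $A\in\cA$. But $\dim B\le m$ while $A[1]$ is $m$-connected, so $\hos_p(B,A[1])=0$ and the composite vanishes, whence $\cJ^2=0$. Being a square-zero ideal, $\cJ$ then lifts isomorphisms: a lift $\phi$ of an invertible morphism of $\oS_p^n/\cJ$ has an inverse modulo $\cJ$, and since the defect lies in $\cJ$ and $\cJ^2=0$ this inverse is corrected to a genuine one, so $\phi$ itself is an isomorphism. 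Hence $\oS_p^n$ and $\oS_p^n/\cJ$ have the same isomorphism classes; the finiteness of the groups $\cJ(X,Y)$ used implicitly along the way is supplied by \cite[Corollary 1.10]{dk}.

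I expect the main obstacle to be the sharp identification of the two ideals rather than anything about localization: one must verify that the indeterminacy in lifting a morphism of cones is exactly absorbed by $\cJ$ and that the obstruction to lifting a map $Cu\to Cv$ to a pair $(\al,\be)$ is governed precisely by $\cI$. This is where the inequalities $n\le m<2n-1$ have to be used quantitatively, through the vanishing of the relevant $\hos_p$-groups, and not merely as a qualitative connectivity gap. Everything concerning $p$-localization is formal once one records that $\mZ_p\*(-)$ is exact and commutes with cones up to isomorphism; the genuine work is the triangulated bookkeeping, identical in form to the integral case.
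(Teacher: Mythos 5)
Your proposal is correct and takes essentially the same route as the paper, which gives no written proof of Theorem~\ref{12} but simply invokes the integral statement \cite[Theorem 2.2]{d2} in localized form: your reconstruction --- the cone functor made well defined modulo $\cJ$ via TR3, density by skeletal splitting at dimension $m$, fullness and kernel $=\cI$ by triangle chases resting on the vanishing $\hos_p(B,A'[1])=0$ for $\dim B\le m$ and $A'$ $(m-1)$-connected, and $\cJ^2=0$ from that same vanishing --- is precisely the argument behind the cited theorem, transported along the exact localization $\mZ_p\*(-)$. Your square-zero lifting of isomorphisms also correctly supplies the final clause about isomorphism classes, which the paper likewise leaves to the reader.
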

  
 Finally, recall that, for $k<l<k+2p(p-1)-1$, the only non-trivial $p$-components of the
 stable homotopy groups $\hos(S^l,S^k)$ are 
 $\hos_p(S^{k+q_s},S^k)=\mZ/p$, where $1\le s<p$ and $q_s=2s(p-1)-1$ \cite{tod}.

 \section{Moore polyhedra}
 \label{s2}
 
 The only atoms in $\cS_p^2$ are \emph{Moore atoms} $M_k$ ($k\in\mN$) which are cones of the maps 
 $S^2\xarr{p^k}S^2$. We denote their $d$-dimensional suspensions $M_k[d-3]$ by $M^d_k$ and call them
 \emph{Moore polyhedra}. For unification, we denote $S^d$ by $M^d_0$. We need to know the morphism groups 
 $\kM^{dr}_{kl}=\hos_p(M^r_l,M^d_k)$. We always suppose that $d-1\le r<d+2p-1$. Obviously, 
 $\kM^{dd}_{00}=\mZ_p$, $\kM^{d,d+2p-3}_{00}=\mZ/p$ and $\kM^{dr}_{00}=0$ if $r\notin\set{d,d+2p-3}$.
 If $k>0$, from the cofibre sequences
\begin{equation}\label{Ekd} 
   S^{d-1}\xarr{p^k} S^{d-1}\to M^d_k\to S^d\xarr{p^k} S^d  \tag{$\bE_k^d$}
 \end{equation}
 one easily obtains that $\kM^{dr}_{0k}=\kM^{dr}_{k0}=0$, except the cases
 \begin{align*}
 \kM^{d,d-1}_{k0}&\simeq\kM^{dd}_{0k}\simeq\mZ/p^k,\\
 \kM^{d,d+2p-3}_{k0}&\simeq\kM^{d,d+2p-3}_{0k}\simeq\\&\simeq\kM^{d,d+2p-4}_{k0}
 	\simeq\kM^{d,d+2p-2}_{0k}\simeq\mZ/p
 \end{align*}
 The values of $\kM^{dr}_{kl}$ for $k,l\in\mN,\,d-1\le r<d+2p-1$ can be obtained 
 if we apply     $\hos_p(\kM^r_l,\_\,)$ to the cofibre sequences \eqref{Ekd}. It gives exact 
 sequences 
\[
  \kM^{d-1,r}_{0l}\xarr{p^k} \kM^{d-1,r}_{0l}\to \kM^{dr}_{kl}\to \kM^{dr}_{0l}\xarr{p^k} \kM^{dr}_{0l},
\]  
 whence we get
 \begin{equation}\label{e21} 
 \kM^{dr}_{kl}=\begin{cases}
  \mZ/p^{\min(k,l)} & \text{if } r\in\set{d-1,d},\\
  \mZ/p & \text{if } r\in\set{d+2p-2,d+2p-4},\\
  \mZ/p\+\mZ/p &\text{if } r=d+2p-3,\\
  0 &\text{in other cases},
 \end{cases}
 \end{equation}
 The only non-trivial value here is for $r=d+2p-3$: we need to know that the exact sequence 
 \begin{equation}\label{e22} 
  0\to\mZ/p\xarr\al\kM^{d,d+2p-3}_{kl}\xarr\be\mZ/p\to0
  \end{equation} 
 splits. It splits indeed for $k=1$ since the middle term 
 is a module over $\kM^{dd}_{11}=\mZ/p$. If $k>1$, suppose that the sequence for $\kM^{d,d+2p-3}_{k-1,l}$ 
 splits. The commutative diagram
 \begin{equation}\label{Mk} 
 \begin{CD}
 S^{d-1}@>{p^k}>> S^{d-1}@>>> M^d_k@>>> S^d @>{p^k}>> S^d  \\
   @VpVV   @V1VV  @VVV @VpVV  @V1VV \\
 S^{d-1}@>{p^{k-1}}>> S^{d-1}@>>> M^d_{k-1}@>>> S^d @>{p^{k-1}}>> S^d   
 \end{CD}
 \end{equation}
 induces the commutative diagram
 \[
\begin{CD}
  0@>>> \mZ/p @>>> \kM^{d,d+2p-3}_{kl} @>>> \mZ/p @>>> 0 \\
  && @V1VV	@VVV  @V0VV \\
   0 @>>> \mZ/p @>>>  \kM^{d,d+2p-3}_{k-1,l} @>>> \mZ/p @>>> 0
 \end{CD} 
 \]
 Since the second row splits, the first one splits as well. Therefore, the sequence \eqref{e22} splits
 for all values of $k$ and $l$. 
 
 \begin{defin}\label{al}
   We fix generators of the groups $\kM^{dr}_{kl}$ and denote, for $r=d+2p-3$,
 
 %\begin{itemize}
% \item 
 by $\al^{d^*_*}_{kl}$ ($k,l\in\mN$) the generator of $\kM^{d+1,r+1}_{kl}$ which is in the 
 image of the map $\al$ from \eqref{e22};
 
% \item
 by $\al^{d}_{kl}$ ($k,l\in\mno$) the generator of $\kM^{dr}_{kl}$ which is not in $\im\al$;

% \item 
 by $\al^{d^*}_{kl}$ ($k\in\mno,l\in\mN$) the generator of $\kM^{d,r+1}_{kl}$;
 
% \item 
 by $\al^{d_*}_{kl}$ ($k\in\mN,l\in\mno$) the generator of $\kM^{d+1,r}_{kl}$;
 
% \item 
 by $\ga^d_{kl}$ ($k,l\in\mno$) the generator of $\kM^{dd}_{kl}$;
 
% \item 
 by $\ga^{d*}_{kl}$  ($k\in\mN,l\in\mno$) the generator of $\kM^{d+1,d}_{kl}$,
% \end{itemize}
 \end{defin} 
 \noindent
 Note that all these morphisms are actually induced by maps $S^r\to S^d$.
 Using diagrams of the sort \eqref{Mk},
 one easily verifies that these generators can be so chosen that 
 \begin{equation}\label{e23} 
 \begin{split}
 & \al^{d^*_*}_{kl}\ga^{r+1}_{ll'}=\begin{cases}
 \al^{d^*_*}_{kl'}& \text{ if }\ l\le l',\\
 0 & \text{ if } l>l',
 \end{cases}\\    
& \al^{d^*}_{kl}\ga^{r+1}_{ll'}=\begin{cases}
 \al^{d^*}_{kl'}& \text{ if }\ l\le l',\\
 0 & \text{ if } l>l',
 \end{cases}\\    
 & \al^{d}_{kl}\ga^r_{ll'}=\begin{cases}
 \al^d_{kl'}& \text{ if }\ l\ge l' \text{ or } l=0,\\
 0 & \text{ if } 0<l<l',
 \end{cases}\\
 & \al^{d_*}_{kl}\ga^r_{ll'}=\begin{cases}
 \al^{d_*}_{kl'}& \text{ if }\ l\ge l' \text{ or } l=0,\\
 0 & \text{ if } 0<l<l',
 \end{cases}\\
 & \al^{d^*}_{kl}\ga^{r*}_{lk'}=\al^{d}_{kk'},\\
 & \al^{d^*_*}_{kl}\ga^{r*}_{lk'}=\al^{d_*}_{kk'},\\
 & \ga^{d+1}_{k'k}\al^{d^*_*}_{kl}=\begin{cases}
 \al^{d^*_*}_{k'l}& \text{ if }\ k\ge k',\\
 0 & \text{ if } k<k',
 \end{cases}\\    
 & \ga^{d+1}_{k'k}\al^{d_*}_{kl}=\begin{cases}
 \al^{d_*}_{k'l}& \text{ if }\ k\ge k',\\
 0 & \text{ if } k<k',
 \end{cases}\\    
 & \ga^d_{k'k}\al^{d}_{kl}=\begin{cases}
 \al^d_{k'l}& \text{ if }\ k\le k',\\
 0 & \text{ if } k>k',
 \end{cases}\\  
 & \ga^d_{k'k}\al^{d^*}_{kl}=\begin{cases}
 \al^{d^*}_{k'l}& \text{ if }\ k\le k',\\
 0 & \text{ if } k>k',
 \end{cases}\\  
 & \ga^{d*}_{k'k}\al^{d}_{kl}=\al^{d_*}_{k'l},\\
 & \ga^{d*}_{k'k}\al^{d^*}_{kl}=\al^{d_*^*}_{k'l}.
 \end{split}
 \end{equation}
 (always $r=d+2p-3$).
 
 \section{Atoms in $\cS_p^{2p-1}$}
 \label{s3} 
 
 For $n\le{2p-1}$ the description of the category $\cS_p^n$ is very simple. First, the next
 fact is rather obvious.
 
 \begin{prop}
 If $n<2p-1$, all indecomposable polyhedra in $\cS^n_p$ are Moore spaces $M^d_k$. In particular,
 $M^2_k$ are atoms in $\cS^2_p$ and there are no atoms in $\cS^n_p$ if $2<n<2p-1$.
 \end{prop}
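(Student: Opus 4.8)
The plan is to induct on $n$ and to use Theorem~\ref{12} to reduce the classification in $\oS_p^n$ to an almost trivial bimodule category. For $n=1$ the objects of $\cS_p^1$ are wedges of spheres, so the indecomposables are exactly the spheres $S^d=M^d_0$; this is the base of the induction.

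For the inductive step let $2\le n<2p-1$ and apply Theorem~\ref{12} with the extreme splitting $m=2n-2$, which is admissible because $n\le 2n-2<2n-1$. Then $\cA$ consists of $(2n-3)$-connected polyhedra of dimension $\le 2n-2$, hence is generated by the single sphere $S^{2n-2}$, while $\cB$ consists of $(n-1)$-connected polyhedra of dimension $\le 2n-2$ and therefore lies in $\cS_p^{n-1}$. By the induction hypothesis the additive generators of $\cB$ are the spheres $S^d$ ($n\le d\le 2n-2$) and the Moore polyhedra $M^d_k$ ($n+1\le d\le 2n-2$, $k\ge1$).

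The crucial step is to evaluate the bimodule $\cM(S^{2n-2},-)$ on these generators and to see that it is concentrated in one entry. By the groups $\kM^{dr}_{kl}$ computed in \eqref{e21}, together with the values of $\hos_p(S^r,S^d)$ recalled at the end of Section~\ref{s1}, a nonzero value of $\cM(S^{2n-2},B)$ can only arise from the ``degree'' gap $r-d=0$ or from an ``$\al$-type'' gap $r-d\in\set{2p-4,2p-3}$, where $r=2n-2$. The degree gap forces $d=2n-2$ and yields $\cM(S^{2n-2},S^{2n-2})=\mZ_p$. The $\al$-type gaps force $d\in\set{2n-2p+1,2n-2p+2}$, and the hypothesis $n\le 2p-2$ makes both of these $\le n$: for a Moore generator, whose lowest cell lies in dimension $n+1$, both are thus excluded; for a sphere target only the gap $2p-3$ can occur, giving $d=2n-2p+1<n$, which is again excluded. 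Hence $\cM$ reduces to the single block $\cM(S^{2n-2},S^{2n-2})=\mZ_p$. This is the only point where the hypothesis $n<2p-1$ is genuinely used, and the one step deserving care.

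Since $\mZ_p$ is a discrete valuation ring, reducing the matrix of an element of $\cE(\cM)$ to Smith normal form splits it, up to the unit block $S^{2n-2}\xarr{1}S^{2n-2}$ (whose cone is contractible), into blocks $S^{2n-2}\xarr{p^k}S^{2n-2}$, zero rows, zero columns, and the untouched generators of $\cB$. Taking cones as in Theorem~\ref{12}, these summands become $M^{2n-1}_k$, $S^{2n-2}$, $S^{2n-1}$, and the spheres and Moore polyhedra of $\cB$, respectively; since $\cJ^2=0$ the isomorphism classes of $\oS_p^n$ agree with those of $\oS_p^n/\cJ$, so every indecomposable of $\cS_p^n$ is a Moore polyhedron $M^d_k$ (with $M^d_0=S^d$), which closes the induction. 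The two special assertions then follow at once: for $n=2$ the blocks $p^k$ produce precisely the Moore atoms $M^2_k$ ($k\ge1$, corresponding to the non-units $p^k$), which are indecomposable and are not wedges of spheres, hence lie in $\cS_p^2$ but not in $\cS_p^1$; and for $2<n<2p-1$ every indecomposable is a sphere or a Moore polyhedron, so it already belongs to $\cS_p^1\sbe\cS_p^{n-1}$ or to $\cS_p^2\sbe\cS_p^{n-1}$ and is therefore not an atom.
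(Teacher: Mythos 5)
Your proof is correct and takes essentially the same route as the paper's: induction on $n$, applying Theorem~\ref{12} with $m=2n-2$ so that $\cA$ is generated by $S^{2n-2}$, observing that all morphism groups into the generators of $\cB$ vanish except $\kM^{2n-2,2n-2}_{00}=\mZ_p$, and splitting via diagonalization over $\mZ_p$ to get only the Moore spaces $M^{2n-1}_k$ as new indecomposables. (Your enumeration of possible gaps omits the case $r=d-1$ coming from $\kM^{d,d-1}_{k0}$, but that would force $d=2n-1$, above the dimension bound for $\cB$, so nothing is lost.)
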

 \begin{proof}[Proof \rm{} is an easy induction.]
  For $n=2$ it is known. Suppose that $2<n<2p-1$ and the claim is true for 
  $\cS^{n-1}_p$. We use Theorem~\ref{12} with $m=2n-2$. Then $\cA$ consists of wedges of the sphere 
  $S^{2n-2}$, while the spheres $S^d\ (n\le d\le 2n-2)$ and the Moore atoms $M^d_k\ (n<d\le 2n-2)$ form a 
  set of additive generators of $\cB$. Note that in our case $\kM^{dr}_{k0}=0$ for $n<d\le r\le2n-2$, 
  except $\kM^{2n-2,2n-2}_{00}$. Therefore, the only new indecomposable polyhedra in $\cS^n_p$ are the 
  Moore spaces $M^{2n-1}_k$, which are not atoms.
 \end{proof}
 
 Consider the category $\cS^{2p-1}_p$. Again we use Theorem~\ref{12} with $m=2n-3=4p-5$. Now
 a set of additive generators of $\cA$ is  \\
 \centerline{$\bA=\set{S^{4p-4}=M^{4p-4}_0,\,S^{4p-5}=M^{4p-5}_0,\,M^{4p-5}_k}$,} \\
 and a set of additive generators of $\cB$ is \\
 \centerline{$\bB=\set{S^d=M^d_0\ (2p-1\le d\le 4p-5),\ M^d_k\ (2p-1<d\le 4p-5)}$.}
 The only non-zero values of $\hos_p(A,B)$, where $A\in\bA,\,B\in\bB$, are 
 
 %\begin{itemize}
 %\item 
 $\kM^{2p,4(p-1)}_{kl}\simeq\mZ/p$, with generators 
 $\al^{(2p-1)_*}_{kl}$ $(k\in\mN,\,l\in\mno)$, 
 
 %\item 
 $\kM^{2p-1,4(p-1)}_{0l}\simeq\mZ/p$ with generators $\al^{2p-1}_{0l}$ $(l\in\mno)$,
 
 %\item 
 $\kM^{4p-5,4p-5}_{00}=\mZ_p$ with generator $\ga^{4p-5}_{00}$.\\
 %\end{itemize}
 Therefore, the matrix $F$ defining a morphism $f:A\to B$ ($A\in\cA,\,B\in\cB$) is a direct sum
 $F'\+F''$, where $F''$ is with coefficients from $\kM^{4p-5,4p-5}_{00}$ and $F'$ is a block
 matrix $(F_{kl})_{k,l\in\mno}$, where $F_{kl}$ is with coefficients from $\kM^{2p,4(p-1)}_{kl}$
 if $k\ne0$ and $F_{0l}$ is with coefficients from $\kM^{2p-1,4(p-1)}_{0l}$. We denote by $F_k$ the
 horizontal stripe $(F_{kl})_{l\in\mno}$ with fixed $k$ 
 and by $F^l$ the vertical stripe $(F_{kl})_{k\in\mno}$ with
 fixed $l$. Morphisms between objects from $\bA$ and $\bB$ act according to the rules \eqref{e23}. 
 They imply that two matrices $F$ and $G$ of such structure define isomorphic objects from $\cE(\cM)$ 
 \iff $G''=TF''T'$ for some invertible matrices $T,T'$ over $\mZ_p$ and $F'$ can be transformed to $G'$ 
 by a sequence of the following transformations:
 
 %\begin{itemize}
 %\item 
 $F_k\mapsto TF_k$, where $T$ is an invertible matrix over $\mZ/p$;
 
 %\item 
 $F^l\mapsto F^lT'$, where $T'$ is an invertible matrix over $\mZ/p$;
 
 %\item 
 $F_k\mapsto F_k+UF_{k'}$, where $k'>k$ or $k'=0,\,k\ne0$ and $U$ is any matrix of
 appropriate size over $\mZ/p$;
 
 %\item 
 $F^l\mapsto F^l+F^{l'}U'$, where $l'<l$ and $U'$ is any matrix of
 appropriate size over $\mZ/p$.\\
 %\end{itemize}
 Using these transformations one can easily make the matrix $F''$ diagonal and reduce $F'$ to a matrix
 having at most one non-zero element in each row and in each column. Then the corresponding object from
 $\cE(\cM)$ splits into direct sum of objects given by $1\xx1$ matrices. The $1\xx1$ matrices
 over $\kM^{4p-5,4p-5}_{00}$ give Moore polyhedra $M^{4p-4}_t$, which are not atoms 
 (and belong to $\cA$).
 Therefore, the atoms in $\cS^{2p-1}_p$ are $C_{kl}$ $(k,l\in\mno)$ corresponding to the $1\xx1$
 matrices $(\al^{(2p-1)_*}_{kl})$ if $k\ne0$ and to $(\al^{2p-1}_{0l})$ if $k=0$. We call these 
 polyhedra \emph{Chang atoms}, in analogy with \cite{ba}. They are defined by the cofibration
 sequences 
  \begin{equation}\label{Ckl} 
  \begin{split}
   M^{4p-4}_l\to M^{2p}_k\to C_{kl}\to M_l^{4p-3} \to M^{2p+1}_k \quad\text{if } k\ne0,\\  
   M^{4p-4}_l\to S^{2p-1}\to C_{0l}\to M_l^{4p-3} \to S^{2p} \quad\text{if } k=0.
  \end{split}
  \tag{$\bC_{kl}$}
 \end{equation}
  We can also present Chang atoms by their \emph{gluing diagrams}, as in \cite{ba,d1,d2}:
 \[
 \xymatrix@R=.5ex@C=1em{
 	& {C_{00}}  &&& {C_{0l}} &&& {C_{k0}} &&& {C_{kl}} \\
  	 {4p-3\ } \ar@{.} [rrrrrrrrrrr] & \cel \ar@{-} [ddd] &&& \cel \ar@{-}[dddl] &&& 
	\cel \ar@{-}[dddl]  &&& \cel \ar@{-}[dddl] \ar@{-}[d]^{p^l} & {} \\
       {4p-4\ } \ar@{.} [rrrrrrrrrrr] &&&& \cel \ar@{-}[u]_{p^l} && 
	 &&& &\cel&{} \\
 {\ 2p\ \ \ }  \ar@{.} [rrrrrrrrrrr] &&&&&& \cel \ar@{-}[d]_{p^k} &&& \cel\ar@{-}[d]_{p^k} & & {}\\
        {2p-1\ } \ar@{.} [rrrrrrrrrrr] & \cel && \cel &&& \cel &&& \cel&&  {} 	
  } 
 \]
 Here bullets correspond to cells, lines show the attaching maps and these maps are
 specified if necessary. 

 Theorem~\ref{12} and cofibration sequences \eqref{Ckl} easily give the following values of the 
 endomorphism rings of Chang atoms modulo the ideal $\cJ$:
\begin{align*}
 \De&=\setsuch{(a,b)}{a\equiv b\!\pmod p}\sb\mZ_p\xx\mZ_p\ \text{ for }\ C_{00};\\
 \De_k&=\setsuch{(a,b)}{a\equiv b\!\pmod p}\sb\mZ_p\xx\mZ/p^k\ \text{ for }\ C_{0k} 
   \text{ and } C_{k0}\ (k\ne0);\\
\De_{kl}&=\setsuch{(a,b)}{a\equiv b\!\pmod p}\sb\mZ/p^k\xx\mZ/p^l\ \text{ for }\ C_{kl}\ (k\ne0,l\ne0).
\end{align*}
 Since all these rings are local and $\cJ^2=0$, the endomorphism rings of Chang atoms are local.
 Therefore, these polyhedra are indeed indecomposable (hence atoms). Moreover, we can use the
 unique decomposition theorem of Krull--Schmidt--Azumaya \cite[Theorem I.3.6]{bass} and obtain 
 the final result.
 
 \begin{theorem}\label{31} 
 The atoms in $\cS^{2p-1}_p$ are Chang atoms $C_{kl}$ \bl$k,l\in\mno$\br. 
 Every polyhedron from $\cS^{2p-1}_p$ uniquely decomposes into a wedge of spheres, Moore polyhedra
 and Chang atoms.
 \end{theorem}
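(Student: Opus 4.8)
The plan is to reduce the classification to the matrix problem supplied by Theorem~\ref{12} and then simply read off the indecomposables together with their endomorphism rings. Taking $n=2p-1$ and $m=2n-3=4p-5$, Theorem~\ref{12} gives, via the cone functor, an equivalence $\cE(\cM)/\cI\simeq\oS_p^{2p-1}/\cJ$, so it suffices to classify the indecomposable objects of $\cE(\cM)$ up to the isomorphisms induced by the four transformations listed above, which encode the action of the morphisms between the additive generators $\bA$ and $\bB$ through the multiplication rules \eqref{e23}. Since $\cJ^2=0$, the ideal $\cJ$ is nilpotent, so idempotents lift along $\oS_p^{2p-1}\to\oS_p^{2p-1}/\cJ$; hence an object of $\oS_p^{2p-1}$ is indecomposable exactly when its image is, and direct-sum decompositions in the two categories correspond. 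I may therefore argue entirely in the matrix picture.

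First I would perform the matrix reduction. Writing an object as $F=F'\+F''$, invertible row and column transformations over $\mZ_p$ bring $F''$ to diagonal form, producing copies of the Moore polyhedra $M^{4p-4}_t$; these lie in $\cA$ and so are not atoms of $\cS_p^{2p-1}$. For $F'$, whose entries all lie in copies of $\mZ/p$, the admissible transformations are invertible row and column operations over $\mZ/p$ together with the order-restricted stripe additions $F_k\mapsto F_k+UF_{k'}$ \bl$k'>k$, or $k'=0,\,k\ne0$\br\ and $F^l\mapsto F^l+F^{l'}U'$ \bl$l'<l$\br. Since everything is over the field $\mZ/p$ and the additions respect linear orders on the indices $k$ and $l$, a routine Gaussian elimination clears all but at most one entry in each row and each column. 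The surviving $1\xx1$ blocks $(\al^{(2p-1)_*}_{kl})$ \bl for $k\ne0$\br\ and $(\al^{2p-1}_{0l})$ are precisely the objects $C_{kl}$ defined by the cofibration sequences \eqref{Ckl}.

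Next I would verify that each $C_{kl}$ is genuinely indecomposable. Theorem~\ref{12} and the sequences \eqref{Ckl} identify the endomorphism ring of $C_{kl}$ modulo $\cJ$ with one of the rings $\De$, $\De_k$, $\De_{kl}$ displayed above; each is the pullback of two local rings along their common residue field $\mZ/p$ and is therefore local. Because $\cJ$ is nilpotent and the groups $\cJ(X,Y)$ are finite, units lift modulo $\cJ$, and a ring whose quotient by a nil ideal is local is itself local; hence $\End_{\cS_p}(C_{kl})$ is local and $C_{kl}$ is an atom. The same reasoning (or direct computation) shows that spheres and Moore polyhedra also have local endomorphism rings.

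Finally, the unique decomposition follows from Krull--Schmidt--Azumaya \cite[Theorem I.3.6]{bass}: every polyhedron of $\cS_p^{2p-1}$ is a finite wedge of objects with local endomorphism rings---spheres, Moore polyhedra, and Chang atoms---and such a decomposition is unique. I expect the genuine work to be confined to the matrix reduction of $F'$ and to the bookkeeping confirming that \eqref{e23} yields exactly the four admissible transformations; indecomposability and uniqueness then follow formally from the square-zero property of $\cJ$ and Azumaya's theorem.
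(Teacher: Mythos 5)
Your proposal is correct and follows essentially the same route as the paper: the reduction via Theorem~\ref{12} with $m=4p-5$ to the matrix problem, Gaussian elimination over $\mZ_p$ and $\mZ/p$ yielding the $1\xx1$ blocks realized by the Chang atoms $C_{kl}$, locality of the rings $\De$, $\De_k$, $\De_{kl}$ lifted through the square-zero ideal $\cJ$, and Krull--Schmidt--Azumaya for uniqueness. Your explicit remarks on idempotent and unit lifting along $\oS_p^{2p-1}\to\oS_p^{2p-1}/\cJ$ merely spell out what the paper compresses into the observation that $\cJ^2=0$ preserves isomorphism classes.
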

 
 In Section \ref{s5} we will need the whole endomorphism ring of the atom $C=C_{00}$. 
 Applying $\hos_p$ to the sequence ($\bC_{00}$) as below, we obtain the commutative diagram 
 with exact columns and rows
 \begin{equation}\label{e31} 
 \xymatrix@R=1em@C=1.5em{
   &  S^{2p} \arb[r] & S^{4p-3}\arb[r] & C \arb[r] & S^{2p-1} \arb[r] & S^{4p-4} \\
 S^{4p-4} \ar[d] & \oo\ar[r]\ar[d] & \oo \ar[r]\ar[d] & \oo \ar[r]\ar[d] & \oo
     \ar[r]\ar[d] & \mZ_p \ar[d]_s  \\
 S^{2p-1}\ar[d] & \oo \ar[r]\ar[d] & \oo \ar[d]\ar[r] & {p\mZ_p}\ar[r]\ar[d] & \mZ_p
    \ar[r]^s\ar[d]_1 & \mZ/p \ar[d] \\
 C \ar[d] & \oo \ar[r]\ar[d] & {p\mZ_p}\ar[r]\ar[d] & \hos_p(C,C) \ar[r]\ar[d] &
   {\mZ_p}\ar[r]\ar[d] & \oo \ar[d] \\
 S^{4p-3}\ar[d] & \oo \ar[r]\ar[d] & {\mZ_p}\ar[r]^1\ar[d]_s & {\mZ_p}\ar[r]\ar[d] &
   \oo \ar[r]\ar[d] & \oo\ar[d] \\
 S^{2p} & \mZ_p \ar[r]^s & \mZ/p \ar[r] & \oo \ar[r] & \oo \ar[r] & \oo
  }
 \end{equation}
 where $s$ marks surjections. The central row and the central column, corresponding to the 
 polyhedron $C$, are easily calculated from all other values. It shows that $\hos_p(C,C)$ has
 no torsion, hence coincides with $\De$. Analogous calculations show that $\cJ(C_{kl},C_{kl})$ 
 equals $\mZ/p$ if $k=0$ or $l=0$ (but not both) and $(\mZ/p)^2$ if both $k\ne0$ and $l\ne0$.
 
 Theorem~\ref{31} also gives a description of \emph{genera} of $p$-primary polyhedra in $\cS^{2p-1}$.
 Recall that a \emph{genus} is a class of polyhedra such that all their localizations are isomorphic
 (in the corresponding localized categories). Certainly, if these polyhedra are $p$-primary, we 
 only need to compare their $p$-localizations. 
 Equivalently, two polyhedra $X,Y$ are in the same genus \iff there is a
 wedge of spheres $W$ such that $X\vee W\simeq Y\vee W$ in $\cS$ \cite[Theorem 2.5]{dk}. 
 Let $g(X)$ be the number of isomorphism
 classes of polyhedra in the genus of $X$. If $\La=\hos(X,X)/\tors(X)$, where $\tors(X)$ is the torsion 
 part of $\hos(X,X)$, then $\mQ\*\La$ is a semi-simple $\mQ$-algebra, so there is a maximal order
 $\Ga\spe\La$ in this algebra. Then $\La\spe m\Ga$ for some positive integer $m$ and $g(X)=g(\La)$ 
 equals the number of cosets
 \[
  \im\ga\backslash (\Ga/m\Ga)^\xx/(\La/m\La)^\xx,
 \]
 where $R^\xx$ denotes the group of invertible elements of a ring $R$ and $\ga$ is the natural
 map $\Ga^\xx\to(\Ga/m\Ga)^\xx$ \cite[Section 3]{dk}. If $X=C_{0k}$ or $X=C_{k0}$, then $\La=\mZ$; if 
 $X=C_{kt}$, then $\La=0$. So $g(X)=1$ for all these cases. For $X=C$ this formula
 implies that $g(C)=(p-1)/2$. If $\nu\in\hos_p(S^{4p-4},S^{2p-1})$ is an element of order $p$,
 the polyhedra from the genus of $C$ can be realized as the cones $C(c)$ of the maps 
 $S^{4p-4}\xarr{c\nu}S^{2p-1}$ for $1\le c\le(p-1)/2$.
 
  \section{Atoms in $\cS_p^n$ for $2p\le n\le 4(p-1)$}
 \label{s4} 
  Let now $2p\le n\le 4(p-1)$. We use Theorem~\ref{12} with $m=n+2p-3$. Then $\cA$ has a set of 
  additive generators\\
 \centerline{$\bA=\set{S^r\ (m\le r<2n-1),\ M^r_l\ (m<r<2n-1,\,l\in\mN}$,}\\
 and $\cB$ has a set of additive generators \\
 \centerline{$\bB=\set{S^d\ (n\le d\le m),\ M^d_k\ (n<d\le m,\,k\in\mN)}$.}
 Morphisms $\vi:A\to B$, where $A\in\cA,\,B\in\cB$, are given by block matrices such that their blocks
 have coefficients from $\kM^{dr}_{kl}$. Taking into consideration Definition~\ref{al}, it is 
 convenient to denote these blocks as follows.
  \begin{defin}\label{41} 
 We introduce sets 
 \begin{align*}
 \dE^\circ&=\left\{e^d_k\ (n<d\le 2(n-p)+1,k\in\mno),\right.\\ 
 &\left.\qquad e^{d*}_k\ (n\le d\le 2(n-p),k\in\mN),\ e^n_0,e^m_0\right\},
 \\
 \dF^\circ&=\left\{f^d_l\ (n<d\le 2(n-p)+1,l\in\mno),\right.\\
 &\left.\qquad f^{d*}_l\ (n\le d\le 2(n-p),l\in\mN),\ f^n_0\right\}, 
 \end{align*}
 and consider a morphism $\vi:A\to B$, where $A\in\cA,\,B\in\cB$, as a block matrix 
 $(\Phi_{ef})_{e\in\dE^\circ, f\in\dF^\circ}$. Namely, 
\\ 
%\begin{itemize}
%\item
 - \ the block $\Phi_{e^d_k,f^d_l}$ consists of coefficients at $\al^d_{kl}$;
 \\ 
%\item  
 - \  the block $\Phi_{e^{d*}_k,f^d_l}$ consists of coefficients at $\al^{d_*}_{kl}$;
  \\
%\item  
- \  the block $\Phi_{e^d_k,f^{d*}_l}$ consists of coefficients at $\al^{d^*}_{kl}$;
  \\
%\item  
 - \ the block $\Phi_{e^{d*}_k,f^{d*}_l}$ consists of coefficients at $\al^{d^*_*}_{kl}$;
  \\
%\item  
 - \ the block $\Phi_{e^m_0,f^n_0}$ consists of coefficients at $\ga^m_{00}$.\\
%\end{itemize}
 Note that for $n=4(p-1)$ we need not specially add $e^m_0$ to $\dE^\circ$, since $m=2(n-p)+1$ in this case.
 
 We also denote by $\Phi_e$ for a fixed $e\in\dE^\circ$ the horizontal stripe 
 $(\Phi_{ef})_{f\in\dF^\circ}$ and 
 by $\Phi^f$ for a fixed $f\in\dF^\circ$ the vertical stripe $(\Phi_{ef})_{e\in\dE^\circ}$.
 \end{defin}
 \noindent
 Note that the horizontal stripes $\Phi_{e^d_k}$ and $\Phi_{e^{(d+1)*}_k}$ have the same number of rows
 and the vertical stripes $\Phi^{f^d_l}$ and $\Phi^{f^{(d+1)*}_l}$ have the same number of columns.
 All blocks $\Phi_{ef}$ defined above have coefficients from $\mZ/p$, except $\Phi_{e^m_0,f^n_0}$ which
 has coefficients from $\mZ_p$.
 
 Using automorphisms of $S^m$ we can make the block $\Phi_{e^m_0,f^n_0}$ diagonal with powers of $p$ or
 zero on diagonal. So we always suppose that it is of this shape and exclude this block from the matrix
 $\Phi$. Then we have to split the remaining part of the vertical stripe $\Phi^{f^n_0}$ and, 
 if $n=4(p-1)$, of the horizontal stripe $\Phi_{e^m_0}$ into several 
 stripes, respectively, $\Phi^{f^{n,s}_0}$ and $\Phi_{e^{m,s}_0}$, where the indices $s\in\mni$ 
 correspond to diagonal entries $p^s$ (setting $p^\8=0$). Respectively, we modify the sets 
 $\dE^\circ$ and $\dF^\circ$. Namely, we denote 
 \begin{equation}\label{e41} 
 \begin{split}
 \dF&=(\dF^\circ\setminus\{f^n_0\})\cup\setsuch{f^{n,s}_0}{s\in\mni},\\
 \dE&=\dE^\circ\setminus\{e^m_0\}\ \text{ if } n<4(p-1),\\
 \dE&=(\dE^\circ\setminus\{e^m_0\})\cup\setsuch{e^{m,s}_0}{s\in\mni}\ \text{ if } n=4(p-1).
 \end{split}
 \end{equation}
 Note that, if $n=4(p-1)$, the number of rows in the horizontal stripe $\Phi_{e^{d,s}_0}$ 
 with $s\ne\8$ equals the number of columns in the vertical stripe $\Phi^{f^{d,s}_0}$. We split
 the sets $\dE$ and $\dF$ according to the upper indices. Namely, $\dE_d$ consists
 of all elements from $\dE$ with the upper index $d$, $d^*$ or, if $d=m$, $(m,s)$; $\dF_d$ 
 consists of all elements from $\dF$ with the upper index $d$, $d^*$ or, if $d=n$, $(n,s)$.
 We define a linear order on each $\dE_d$ and $\dF_d$ setting

 $e^d_k<e^d_{k'}$ and $e^{d*}_k>e^{d*}_{k'}$ if $k<k'$, and $e^d_k<e^{d*}_{k'}$ for all $k,k'$;
 
 if $n=4(p-1)$, then $e^{m,s}_0<e^{m,s'}_0<e^m_k$ for $s>s'$ and any $k\in\mN$;
 
 $f^d_k<f^d_{k'}$ and $f^{d*}_k>f^{d*}_{k'}$ if $k<k'$ or $k>k'=0$, and $f^d_k<f^{d*}_{k'}$ for all 
 $k,k'$;
 
 $f^m_k<f^{m,s}_0<f^{m,s'}_0$ for $s<s'$ and any $k\in\mN$.
 
 The formulae \eqref{e23} imply that two such block matrices $\Phi$ and $\Phi'$ define isomorphic
 objects from $\cE(\cM)$ \iff $\Phi$ can be transformed to $\Phi'$ by a sequence of the following
 transformations:
 
 $\Phi_e\mapsto T_e\Phi_e$, where $T_e$ are invertible matrices and $T_{e^{d*}_k}=T_{e^{d+1}_k}$ for all
 possible values of $d,k$;
 
 $\Phi^f\mapsto \Phi^fT^f$, where $T^f$ are invertible matrices and $T^{f^{d*}_k}=T^{f^{d+1}_k}$ for all
 possible values of $d,k$; 
 
 if $n=4(p-1)$, then, moreover, $T_{e^{m,s}_0}=T^{f^{n,s}_0}$ for all $s\in\mN$ (not for $s=\8$);
 
 $\Phi_e\mapsto U_{ee'}\Phi_{e'}$ if $e'<e$, where $U_{ee'}$ is an arbitrary matrix of the 
 appropriate size;

 $\Phi^f\mapsto \Phi^{f'}U^{f'f}$ if $f'>f$, where $U^{f'f}$ is an arbitrary matrix of the 
 appropriate size.
 
 These rules show that the classification of polyhedra in $\cS_p^n$ actually coincides with the
 classification of representations of the \emph{bunch of chains} 
 $\dX=\set{\dE_d,\dF_d,<,\sim\, \mid n\le d\le m}$ 
 (cf. \cite{bo} or \cite[Appendix B]{bu}), where the relation $\sim$ is defined by the exclusive rules:
 \begin{align*}
 & e^{d*}_k\sim e^{d+1}_k\, \text{ and }\, f^{d*}_k\sim f^{d+1}_k\, 
 \text{ for }\, n<d\le2(n-p),\,k\in\mN,
 \intertext{ and, if $n=4(p-1)$,}
 & e^{m,s}_0\sim f^{n,s}_0\, \text{ for }\, s\in\mN\ \,(\text{not for }\,s=\8).
 \end{align*}
 Thus the description of indecomposable representations given in \cite{bo,bu} implies a description
 of indecomposable polyhedra from $\cS^n_p$. Recall the necessary
 combinatorics. We write $e-f$ and $f-e$ if $e\in\dE_d$ and $f\in\dF_d$ (with the same $d$) and set
 $|\dX|=\dE\cup\dF$.
 
 \begin{defin}\label{word} 
 \begin{enumerate}
 \item A \emph{word} is a sequence $w=x_1r_1x_2r_2\dots x_{l-1}r_{l-1}x_l$, where $x_i\in|\dX|$, 
 $r_i\in\set{-,\sim}$ such that
  \begin{enumerate}
  \item $r_i\ne r_{i+1}$ for all $1\le i<l-1$;
  \item $x_ir_ix_{i+1}$ ($1\le i<l$) according to the definition of the relations $\sim$ and 
  $-$ given above;
  \item if $r_1=-$ ($r_{l-1}=-$), then $x_1\nsim y$ for all $y\in|\dX|$ (respectively, $x_l\nsim y$
  for all $y\in|\dX|$). 
  \end{enumerate} 
  We say that $l$ is the \emph{length of the word} $w$ and write $l=\ln w$.
 \item For a word $w$ as above we denote by $\rE(w)=\setsuch{i}{1\le i\le l,\,x_i\in\dE}$ and
 $\rF(w)=\setsuch{i}{1\le i\le l,\,x_i\in\dF}$.
 \item The \emph{inverse word} $w^*$ of the word $w$ is the word $x_lr_{l-1}x_{l-1}\dots r_2x_2r_1x_1$.
 \item A word $w$ is said to be a \emph{cycle} if $r_1=r_{l-1}=\sim$ and $x_l-x_1$. Then we set $r_l=-$,
 $x_{i+ql}=x_i$ and $r_{i+ql}=r_i$ for all $q\in\mZ$ (in particular, $r_0=-$). 
 \item The \emph{$k$-th shift} of a cycle $w$, where $k$ is an even integer, is the cycle 
 $w^{[k]}=x_{k+1}r_{k+1}\dots r_{k-1}x_k$ (obviously, it is enough to consider $0\le k<l$). 
 \item A cycle $w$ is said to be \emph{non-periodic} if $w\ne w^{[k]}$ for $0<k<l$.
 \item For a cycle $w$ and an integer $0<k<l$ we denote by $\nu(k,w)$ the number of even integers
 $0<i<k$ such that both $x_i$ and $x_{i-1}$ belong either to $\dE$ or to $\dF$.
 \end{enumerate}
 Note that, since $x\nsim x$ for all $x\in|\dX|$, there are no symmetric words and symmetric cycles
 in the sense of \cite[Appendix B]{bu}.
 \end{defin} 
 
 To words and cycles correspond indecomposable representations of the bunch of chains $\dX$ called
 \emph{strings} and \emph{bands}. We describe the corresponding matrices $\Phi$ (recall that we have
 already excluded the part $\Phi_{e_mf_n}$). 
 
 \begin{defin}\label{string} 
 \begin{enumerate}
 \item  If $w$ is a word, the corresponding 
 \emph{string matrix} $\Phi(w)$ is constructed as follows:
\\ 
 - \ its rows are labelled by the set $\rE(w)$ and its columns are 
 labelled by the set $\rF(w)$;
 \\
 - \ the only non-zero entries are those at the places $(i,i+1)$ if $r_i=-$ and $i\in\rE(w)$ and
 $(i+1,i)$ if $r_i=-$ and $x_i\in\rF(w)$; they equal $1$.
 \\
 We denote the corresponding polyhedron by $A(w)$ and call it a \emph{string polyhedron} whenever it
 does not coincide with a sphere, a Moore or a Chang polyhedron.\!%
 \footnote{\,The words consisting of one letter $x$ correspond to spheres, the words of the form 
 $x\sim y$ correspond to Moore polyhedra, the words that only have one symbol `$-$' correspond to 
 Chang polyhedra, and these are all exceptions.}

 \item If $w$ is a non-periodic cycle, $z\in\mN$ and $\pi\ne t$ is a unital 
 irreducible polynomial of
 degree $v$ from $(\mZ/p)[t]$, the \emph{band matrix} $\Phi(w,z,\pi)$ is a block matrix, 
 where all blocks are of size $zv\xx zv$, constructed as follows:
 \\
 - \ its horizontal stripes are labelled by the set $\rE(w)$ and its
 vertical stripes are labelled by the set $\rF(w)$;
 \\
 - \ the only non-zero blocks are those at the places $(i,i+1)$ if $r_i=-$ and $i\in\rE(w)$ and
 $(i+1,i)$ if $r_i=-$ and $i\in\rF(w)$ (note that here $i=l$ is also possible);
 \\
 - \ these non-zero blocks equal $I_{zv}$ (the identity $zv\xx zv$ matrix), except the block at the place
 $(l1)$ (if $l\in\rE(w)$) or $(1l)$ (if $l\in\rF(w)$) which is the Frobenius matrix with the
 characteristic polynomial $\pi^v$. If $\pi=t-c$ is linear, we replace the Frobenius matrix by
 the Jordan $z\xx z$ block with the eigenvalue $c$.
 \\
 We denote the corresponding polyhedron by $A(w,z,\pi)$ and call it a \emph{band polyhedron}.\!%
 \footnote{\,Band polyhedra never coincide with spheres, Moore or Chang polyhedra.}
 \end{enumerate}
 \end{defin}
 
 Using these notions, we obtain the following description of polyhedra in the category $\cS^n_p$.
 
 \begin{theorem}\label{main} 
 \begin{enumerate}
 \item  All string and band polyhedra are indecomposable and every indecomposable polyhedron from
 $\cS^n_p$, except spheres, Moore and Chang polyhedra, is isomorphic to a string or band polyhedron.
 
 \item  The only isomorphisms between string and band polyhedra are the following:
   \begin{enumerate}
   \item  $A(w)\simeq A(w^*)$;
   \item  $A(w,z,\pi)\simeq A(w^*,z,\pi)$;
   \item  $A(w,z,\pi)\simeq A(w^{[k]},z,\pi^*)$, where $\pi^*=\pi$ if $\nu(k,w)$ is even and
   $\pi^*(t)=t^z\pi(0)^{-1}\pi(1/t)$ if $\nu(k,w)$ is odd.\!%
 \footnote{\,If $\pi=t^v+a_1t^{v-1}+\dots+a_{v-1}t+a_v$, then 
 $\pi^*=t^v+a_v^{-1}(a_{v-1}t^{v-1}+\dots+a_1t+1)$.}
   \end{enumerate}
   
 \item  Endomorphism rings of string and band polyhedra are local, hence every polyhedron from
 $\cS^n_p$ uniquely decomposes into a wedge of spheres, Moore and Chang polyhedra, and 
 string and band polyhedra.
 
 \item  A string or band polyhedron is an \emph{atom} in $\cS^n_p$ \iff the corresponding word contains
 at least one letter from $\dE_d$ and at least one letter from $\dF_{2(n-p)+1}$. 
 \end{enumerate}
 \end{theorem}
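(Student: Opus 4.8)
The plan is to harvest parts (1)--(3) from the known classification of representations of bunches of chains \cite{bo,bu} through the reductions already established, and to settle (4) by a direct dimension count. For (1) and (2) I would first make the dictionary between the matrix problem and the combinatorics precise. Theorem~\ref{12}, together with the morphism tables \eqref{e21}--\eqref{e23}, identifies the objects of $\oS_p^n/\cJ$ --- and hence, since $\cJ^2=0$ forces $\oS_p^n$ and $\oS_p^n/\cJ$ to have the same isomorphism classes, the objects of $\oS_p^n$ --- with the objects of $\cE(\cM)/\cI$, while the admissible transformations of the block matrices $\Phi$ displayed above are exactly those defining isomorphism of representations of $\dX$. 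I would check that the bookkeeping of Definition~\ref{41} and \eqref{e41} (the separate diagonalization of $\Phi_{e^m_0,f^n_0}$, the split letters $f^{n,s}_0,e^{m,s}_0$, and the extra relation $e^{m,s}_0\sim f^{n,s}_0$ when $n=4(p-1)$) leaves no residual constraint, so that the matrix problem \emph{is} the representation problem of $\dX$. Granting this, the description of indecomposables in \cite{bo,bu} yields exactly the string and band polyhedra of Definition~\ref{string}, proving (1), and the accompanying list of isomorphisms yields (2); the degenerate words (one letter; a pair $x\sim y$; a single symbol $-$) are precisely the spheres, Moore and Chang polyhedra of Sections~\ref{s2}--\ref{s3}, so they are rightly excluded.

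For (3) I would observe that each string and band already has local endomorphism ring inside $\cE(\cM)/\cI$: for a string the endomorphism algebra modulo radical is $\mZ/p$, while for a band $A(w,z,\pi)$ it is local with residue field $(\mZ/p)[t]/(\pi)\cong\mathbb F_{p^v}$, $v=\deg\pi$. Carrying this across the equivalence $\cE(\cM)/\cI\simeq\oS_p^n/\cJ$ makes the endomorphism rings of the corresponding polyhedra in $\oS_p^n/\cJ$ local. I would then lift to $\oS_p^n$: the kernel of $\End_{\oS_p^n}(X)\to\End_{\oS_p^n/\cJ}(X)$ is the finite ideal $\cJ(X,X)$, which is nilpotent because $\cJ^2=0$, and an extension of a local ring by a nilpotent ideal is again local. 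Krull--Schmidt--Azumaya \cite[Theorem I.3.6]{bass} then gives the asserted unique decomposition.

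Part (4) is a dimension count. Every object of $\oS_p^n$ has its cells among the $n$ successive dimensions $n,\dots,2n-1$, so such an object, being already indecomposable, is an atom \iff it lies outside $\cS_p^{n-1}$, i.e. \iff its cells span all $n$ of these dimensions; equivalently, \iff it carries a cell in the lowest dimension $n$ and one in the highest dimension $2n-1$. Because the cone shifts every cell of an object of $\cA$ up by one, a cell in dimension $2n-1$ can only come from an $\cA$-generator $S^{2n-2}$ or $M^{2n-2}_l$, whose two incident morphism types are labelled by the $\sim$-linked letters $f^{2(n-p)+1}_l$ and $f^{(2(n-p))*}_l$. Condition (c) of Definition~\ref{word} prevents a letter possessing a $\sim$-partner from ending the word next to a $-$, so whenever $f^{(2(n-p))*}_l$ occurs its partner $f^{2(n-p)+1}_l$ occurs too; hence a cell in dimension $2n-1$ is present \iff the word meets $\dF_{2(n-p)+1}$. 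The symmetric analysis at the bottom, where the relevant generators are $S^n$ and $M^{n+1}_k$, shows that a cell in dimension $n$ is present \iff the word meets the bottom block $\dE_n$ (the case $n=4(p-1)$ is handled by the split letters in the same way). Combining the two conditions gives the stated criterion.

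The hard part is not the topology but pinning down this dictionary, and within it the most delicate point is the band isomorphism (2c): one must follow a cyclic shift $w\mapsto w^{[k]}$ through the normal form of \cite{bo,bu} and verify that the Frobenius/Jordan parameter survives only up to the involution $\pi\mapsto\pi^*$, with the parity of $\nu(k,w)$ deciding whether the involution is triggered. This is exactly where the orientation data carried by $-$ versus $\sim$ and by the linear orders on the $\dE_d,\dF_d$ must be tracked with care; once that dictionary and this parameter are fixed, the locality argument, the lifting through $\cJ^2=0$, and the dimension count of (4) are all routine.
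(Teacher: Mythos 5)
Your proposal follows essentially the same route as the paper, which likewise proves (1)--(3) by identifying the reduced block-matrix problem with the representations of the bunch of chains $\dX$ and invoking \cite{bo,bu}, lifts locality of endomorphism rings through the nilpotent ideal $\cJ$ (exactly as done for Chang atoms in Section~\ref{s3}) before applying Krull--Schmidt--Azumaya, and settles (4) by the same identification of letters from the extreme blocks with cells in dimensions $n$ and $2n-1$. Your reading of the misprinted $\dE_d$ in part (4) as the bottom block $\dE_n$ (spanned by $e^n_0$ and $e^{n*}_k$, i.e.\ the generators $S^n$ and $M^{n+1}_k$) is the intended one, so no correction is needed.
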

 
 Note that in this case we can simplify the writing of the words, since for every $x\in|\dX|$ there is
 at most one element $y\in|\dX|$ such that $x\sim y$ and then $x-y$ is impossible. Hence we can omit all
 symbols $-$ and write $x$ instead of $x\sim y$. For instance, 
 $e^d_kf^{d-1}_le^{(d-2)*}_{k'}f^{d-1}_{l'}$ means 
 $e^d_k\sim e^{(d-1)*}_k-f^{d-1}_l\sim f^{(d-2)*}_l-e^{d-2}_{k'}
 \sim e^{(d-1)*}_{k'}-f^{d-1}_{l'}\sim f^{(d-2)*}_{l'}$.
 One can prove that there can be at most one place in a word $w$ where a fragment $e^{m,s}\sim f^{n,s}$ or 
 $f^{n,s}_0\sim e^{m,s}_0$ occurs; moreover, if it occurs, $w$ cannot be a cycle.

 \begin{exam}\label{45} 
 We give several examples of string and band polyhedra and their gluing diagrams. In these examples we
 suppose that $p=3$.
 \begin{enumerate}
 \item The ``smallest'' possible string atoms are for $n=6$. They have 3 cells and are given by the words
 $e^{6*}_kf^7_0$ or $e^6_0f^{6*}_l$. The smallest band atoms have 4 cells. They are $A(w_0,1,t\mp1)$,
 where $w_0=e^7_kf^7_l$. Here are their gluing diagrams:
 \[
  \xymatrix@R=.5ex@C=2.5em{
 11 \ar@{.}[rrrrrrr] && \cel && \cel \ar@{-}[d]^{3^l} & & \cel \ar@{-}[d]^{3^l} & \\
 10 \ar@{.}[rrrrrrr] &&&& \cel && \cel &\\
 9 \ar@{.}[rrrrrrr] &&&&&&& \\
 8 \ar@{.}[rrrrrrr] &&&&&&& \\
 7 \ar@{.}[rrrrrrr] & \cel \ar@{-}[d]_{3^k} \ar@{-}[uuuur] &&&& 
 		\cel \ar@{-}[d]_{3^k} \ar@{-}[uuuur] && \\
 6 \ar@{.}[rrrrrrr] & \cel && \cel  \ar@{-}[uuuur] && \cel  \ar@{-}[uuuur]_(.7){\pm1} &&
 }  
 \]
 \item  More complicated band atoms are $A(w_0,1,t^2+1)$ and $A(w_0,2,t\mp1)$. Their gluing diagrams
 are
 \[
  \xymatrix@R=.5ex@C=2.5em{
 11 \ar@{.}[rrrrrrrrr] &&& \cel\ar@{-}[d]^{3^l} & \cel\ar@{-}[d]^{3^l}
 && & \cel\ar@{-}[d]^{3^l} & \cel\ar@{-}[d]^{3^l} &\\
 10 \ar@{.}[rrrrrrrrr] &&& \cel &\cel &&& \cel &\cel &\\
 9 \ar@{.}[rrrrrrrrr] &&&&&&&&& \\
 8 \ar@{.}[rrrrrrrrr] &&&&&&&&& \\
 7 \ar@{.}[rrrrrrrrr] & \cel \ar@{-}[d]_{3^k} \ar@{-}[uuuurr]& \cel \ar@{-}[d]_{3^k} \ar@{-}[uuuurr] &&&
  \cel \ar@{-}[d]_{3^k} \ar@{-}[uuuurr] & \cel \ar@{-}[d]_{3^k} \ar@{-}[uuuurr] 
 		 &&& \\
 6 \ar@{.}[rrrrrrrrr] & \cel \ar@{-}[uuuurrr]|(.85){-1} & \cel \ar@{-}[uuuur] &&& 
 \cel  \ar@{-}[uuuurr]|(.6){\pm1}
 \ar@{-}[uuuurrr]& \cel \ar@{-}[uuuurr]|(.6){\pm1} &&&
 }  
 \]
 The non-trivial attachments of cells of dimension $10$ come, respectively, from the Frobenius
 matrix {\small$\mtr{0&-1\\1&0}$} and the Jordan block {\small$\mtr{\pm1&1\\0&\pm1}$}.
 \item For the maximal value $n=8$ the smallest atoms contain 4 cells. They are given by the words  $e^8_0f^{8,s}_0f^{11}_0$ and have the gluing diagrams
 \[
 \xymatrix@R=.5ex@C=2.5em{
 15 \ar@{.}[rrrr] & && \cel &\\
 14 \ar@{.}[rrrr] &&&& \\
 13 \ar@{.}[rrrr] &&&& \\
 12 \ar@{.}[rrrr] && \cel \ar@{-}[d]_{3^s\ } && \\
 11 \ar@{.}[rrrr] && \cel \ar@{-}[uuuur] && \\
 10 \ar@{.}[rrrr] &&&& \\
  9 \ar@{.}[rrrr] &&&& \\
  8 \ar@{.}[rrrr] & \cel \ar@{-}[uuuur] &&&
  }
 \]
  \item The band atoms for $n=8$ are rather complicated and cannot be ``small''. For instance, one
  of the smallest is $A(w,1,t\mp1)$, where 
  $w=e^{8*}_{k_1}f^{9*}_{l_1}e^{10*}_{k_2}f^{11}_{l_2}e^{10}_{k_3}f^9_{l_3}$. The gluing diagram
  for this atom is
  \[
 \xymatrix@R=.5ex@C=2.5em{
 15 \ar@{.}[rrrrrrr] && && \cel \ar@{-}[d]&&&\\
 14 \ar@{.}[rrrrrrr] && \cel \ar@{-}[d]\ar@{-}[ddddr] && \cel \ar@{-}[ddddr] &&&\\
 13 \ar@{.}[rrrrrrr] && \cel &&&& \cel \ar@{-}[d] &\\
 12 \ar@{.}[rrrrrrr] &&&&&& \cel & \\
 11 \ar@{.}[rrrrrrr] &&& \cel \ar@{-}[d] \ar@{-}[uuuur] &&&& \\
 10 \ar@{.}[rrrrrrr] &&& \cel && \cel \ar@{-}[d] && \\
  9 \ar@{.}[rrrrrrr] & \cel \ar@{-}[d]\ar@{-}[uuuur] &&&& \cel \ar@{-}[uuuur] &&\\
  8 \ar@{.}[rrrrrrr] & \cel \ar@{-}[uuuurrrrr]|(.6){\pm1} &&&&&&
  }
 \]
 (the powers of $3$ near vertical lines are omitted).
 \item Finally, we give an example of an atom having exactly one cell of each dimension (we do not 
 precise the corresponding word, since it can be easily restored).
 \[
 \xymatrix@R=.5ex@C=2.5em{
 15 \ar@{.}[rrrrrr] && && \cel \ar@{-}[d] \ar@{-}[ddddr] &&\\
 14 \ar@{.}[rrrrrr] && &&  \cel && \\
 13 \ar@{.}[rrrrrr] && \cel \ar@{-}[d] \ar@{-}[ddddr] &&&&\\
 12 \ar@{.}[rrrrrr] && \cel &&&& \\
 11 \ar@{.}[rrrrrr] &&&&& \cel & \\
 10 \ar@{.}[rrrrrr] && & \cel \ar@{-}[d] \ar@{-}[uuuur] &&& \\
  9 \ar@{.}[rrrrrr] && & \cel &&& \\
  8 \ar@{.}[rrrrrr] & \cel \ar@{-}[uuuur] &&&&&
	}
 \]
 Another atom with this property is the properly shifted $S$-dual of this one in the 
 sense of 
 \cite[Chapter 14]{sw}.
 \end{enumerate} 
 \end{exam}
 
 One can also calculate genera of $p$-primary polyhedra for $2p\le n\le 4(p-1)$. Namely, let $\La(X)$
 denotes the ring $\hos(X,X)/\tors(X)$. We call the end $x_1$ or $x_l$ of a word $w$ \emph{spherical}
 if it of the form $e^d_0$ or $f^d_0$. Note that these letters can only occur at an end of the word
 since they are not related by $\sim$ to any letter. It is rather easy to verify that $\La(X)=0$ if
 $X$ is a band polyhedron, while for a string polyhedron $X=A(w)$
 \[
 \La(X)=\begin{cases}
 0 & \text{if } w \text{ has no spherical ends},\\
 \mZ & \text{if one end of } w \text{ is spherical},\\
 \De & \text{ if both ends of $w$ are spherical}.
 \end{cases} 
 \]
 Hence, we obtain the following result.
 \begin{corol}\label{46} 
 If $X$ is a band or string polyhedron, then $g(X)=1$, except the case when $X=A(w)$ and both ends of
 the word $w$ are spherical. In the latter case $g(X)=(p-1)/2$.
 \end{corol}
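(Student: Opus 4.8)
The plan is to reduce everything to the arithmetic formula recalled in Section~\ref{s3}. By \cite[Section~3]{dk}, the genus number $g(X)$ of a $p$-primary polyhedron depends only on the ring $\La(X)=\hos(X,X)/\tors(X)$: if $\Ga\spe\La(X)$ is a maximal order in the semisimple algebra $\mQ\*\La(X)$ and $\La(X)\spe m\Ga$, then $g(X)$ equals the number of double cosets $\im\ga\backslash(\Ga/m\Ga)^\xx/(\La/m\La)^\xx$. Hence the corollary is immediate from the three displayed values of $\La(X)$ together with one short computation in each case, and I would split the argument accordingly: first confirm those values of $\La(X)$, then substitute them into the formula.

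For the first stage I would read off the non-torsion part of $\hos(X,X)$ from the combinatorics of $w$. By Theorem~\ref{main} each ring $\hos_p(X,X)$ is local, and by Theorem~\ref{12} (where $\cJ^2=0$ and $\cJ$ is finite) its non-torsion part agrees with that of the corresponding endomorphism ring in $\cE(\cM)/\cI$, which is determined by the string or band matrix and the multiplication table \eqref{e23}. Among the generators of the groups $\kM^{dr}_{kl}$, the only ones of infinite order are the degree maps $\ga^d_{00}\in\kM^{dd}_{00}=\mZ_p$ on the sphere cells $S^d=M^d_0$; every other entry lies in a finite group. A sphere cell of $X$ corresponds exactly to a spherical letter $e^d_0$ or $f^d_0$, which can occur only at an end of $w$. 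Consequently a band polyhedron (whose cycle has no ends, hence no spherical letter) has finite $\hos(X,X)$ and $\La(X)=0$; the same holds for a string with no spherical end. If exactly one end of $w$ is spherical, the unique sphere cell contributes one infinite-order degree, so $\La(X)=\mZ$. If both ends are spherical, the two end spheres contribute two independent degrees $(a,b)$, while the chain of order-$p$ attaching maps joining them along $w$ (arising from the sequences \eqref{Ekd} through the rules \eqref{e23}) forces the congruence $a\equiv b\pmod p$; this is exactly the computation made for $C=C_{00}$ in the diagram \eqref{e31}, so $\La(X)=\De=\setsuch{(a,b)}{a\equiv b\pmod p}\sb\mZ\xx\mZ$.

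For the second stage I would substitute each value. If $\La(X)=0$ the algebra $\mQ\*\La(X)$ is trivial and the double coset set is a single point, so $g(X)=1$. If $\La(X)=\mZ$ it is already maximal, so one may take $\Ga=\La(X)$ and $m=1$, making $(\Ga/m\Ga)^\xx$ trivial and again $g(X)=1$. The remaining case $\La(X)=\De$ is identical to the ring $\hos_p(C,C)$ of Section~\ref{s3}: here $\Ga=\mZ\xx\mZ$, the least admissible modulus is $m=p$, and $(\Ga/p\Ga)^\xx=(\mZ/p)^\xx\xx(\mZ/p)^\xx$ is abelian of order $(p-1)^2$, so the double cosets are just the cosets of the subgroup generated by $\im\ga$ and the image of $(\La/p\La)^\xx$. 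Since $p$ is odd, $\im\ga=\{\pm1\}\xx\{\pm1\}$ and the image of $(\La/p\La)^\xx$ is the diagonal $\setsuch{(a,a)}{a\in(\mZ/p)^\xx}$; together they generate the subgroup $\setsuch{(x,y)}{y=\pm x}$ of order $2(p-1)$. Therefore $g(X)=(p-1)^2/(2(p-1))=(p-1)/2$, exactly as for $C$.

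The main obstacle is the first stage, and specifically the two-spherical-ends case: one must check that a long string does not accidentally create further infinite-order endomorphisms, and that the chain of attaching maps imposes precisely the single congruence $a\equiv b\pmod p$ — neither a stronger relation nor none at all. I expect this to follow cleanly from the locality of $\End(X)$ (Theorem~\ref{main}) and the explicit table \eqref{e23}, which reduce every such verification to the already-settled Chang computation \eqref{e31}; but it is the one point that needs genuine checking rather than formal substitution into the genus formula.
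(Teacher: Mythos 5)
Your route is the same as the paper's: compute $\La(X)=\hos(X,X)/\tors(X)$ from the combinatorics of $w$, then substitute into the genus formula of \cite[Section 3]{dk} recalled in Section~\ref{s3}. Your second stage is entirely correct and is exactly the computation the paper already carried out for $C=C_{00}$: for $\La=0$ and $\La=\mZ$ the coset set is trivial, and for $\La=\De$ the double cosets of $(\mZ/p)^\xx\xx(\mZ/p)^\xx$ by $\{\pm1\}\xx\{\pm1\}$ and the diagonal give $(p-1)^2/2(p-1)=(p-1)/2$.

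Your first stage, however, contains a concrete error, and it sits precisely at the point you flagged as needing genuine checking. You assert that a sphere cell of $X$ corresponds exactly to a spherical letter $e^d_0$ or $f^d_0$, which can occur only at an end of $w$, and that all other matrix entries lie in finite groups. This overlooks the letters $e^{m,s}_0\sim f^{n,s}_0$ with $s\in\mN$: these also label sphere summands (of $\cB$ and $\cA$, respectively), and they \emph{can} occur in the middle of a word --- for instance the atom $A(e^8_0f^{8,s}_0f^{11}_0)$ of Example~\ref{45}(3) has both ends spherical \emph{and} such a fragment inside. The block joining this pair is the $\ga^m_{00}$-block, whose coefficients lie in $\mZ_p$, not in a finite group, so the common scalar $c\in\mZ_p$ acting on the glued pair of $m$-spheres is an infinite-order endomorphism of the corresponding object of $\cE(\cM)$; your counting would then give $\La(X)$ of rank $3$ for such strings, hence a wrong genus. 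What saves the statement is not finiteness of the morphism groups (nor the table \eqref{e23} alone) but the ideal $\cI$ of Theorem~\ref{12}: since the two spheres are glued by $p^s$, the endomorphism $p^sc$ factors through the attaching map and lies in $\cI$ --- exactly as for the Moore polyhedron $M^{m+1}_s$, whose stable endomorphism ring is finite although both its cells are spheres --- so $c$ survives only as torsion in $\hos(X,X)$ and $\La(X)$ is again $0$, $\mZ$ or $\De$ as the paper lists. Once this case is disposed of (and it is the only one: the paper notes that such a fragment occurs at most once in a word and never in a cycle, so bands are unaffected), the rest of your argument goes through and yields the corollary.
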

 
 \section{Case $n>4(p-1)$}
 \label{s5} 
 
 For $n=4p-3$ we set $m=6p-5=n+2p-2$ and $q=2(n-1)=n+4p-5=m+2p-3$. Then $\cA$ contains Moore 
 polyhedra $M^q_k$ (including $S^q=M^q_0$) and $\cB$ contains the shifted Chang polyhedron 
 $C^m=C_{00}[2p-2]$. Let $\kN_k=\hos_p(M^q_k,C^m)$. Applying $\hos_p(M^q_k,\_\,)$ to the 
 cofibre sequence
 \[%begin{equation}\label{e51} 
  0\to S^{m-1}\to S^n\to C^m\to S^m \to S^{n+1}
 \]%end{equation}
 we get an exact sequence
 \[%begin{equation}\label{e52} 
  0\to \mZ/p \xarr{\la} \kN_k\xarr{\mu} \mZ/p\to 0.
 \]%end{equation}
 Thus $\#(\kN_k)=p^2$. On the other hand, applying $\hos_p(\_\,,C)$ to the cofibre sequence
 \eqref{Ekd} of Section~\ref{s2}, we get an exact sequence
 \[
 \kN_0\xarr{p^k} \kN_0\xarr{\eta} \kN_k\to 0. 
 \]
 Therefore the map $\eta$ is an isomorphism. Setting $k=1$, we see that $p\kN_0=0$,
 hence $\kN_0\simeq\mZ/p\xx\mZ/p$ and $\kN_k\simeq\mZ/p\xx\mZ/p$ for all $k$. 
 We denote by $\la_k$ a generator of $\kN_k$ which is in $\im\la$ and by $\mu_k$ a 
 generator of $\kN_k$ such that $\mu(\mu_k)\ne0$.
 
 Analogous observations show that the generator of the cyclic group $\kM^{qq}_{kl}=\hos_p(M^q_l,M^q_k)$ 
 induces an isomorphism $\kN_k\to \kN_l$ if $k\ge l>0$ and zero map if $0<k<l$. On the other hand,
 the diagram \eqref{e31} implies that an element $(a,b)$ of the ring $\De=\hos_p(C,C)$ acts on $\kN_k$   
 as multiplication by $a$ (recall that $a\equiv b\!\pmod p$). Therefore, a map $\vi:A\to B$, where $A$ is 
 a wedge of Moore polyhedra $M^q_k$ and $B$ is a wedge of Chang polyhedra $C^m$ can be considered as
 a block matrix $\Phi=(\Phi_{ik})_{\substack{k\in\mno\\ i=1,2}}$, where all blocks are with coefficients
 from $\mZ/p$ and both horizontal stripes $\Phi_1,\,\Phi_2$ have the same number of rows. Namely, 
 $\Phi_{1k}$ consists of coefficients at $\la_k$ and $\Phi_{2k}$ consists of coefficients at $\mu_k$.
 Two such matrices define isomorphic objects from $\cE(\kM)$ \iff one of them can be transformed to the
 other by a sequence of the following transformations:
 
 $\Phi_1\mapsto T\Phi_1$ and $\Phi_2\mapsto T\Phi_2$ with the same invertible matrix $T$;
 
 $\Phi^k\mapsto \Phi^kT^k$ for some invertible matrix $T^k$;
 
 $\Phi^k\mapsto \Phi^k+\Phi^lU_{lk}$ for any matrix $U_{lk}$ of the appropriate size, where $l>k$ or
 $l=0<k$.
 \\
 It is well-known that this matrix problem is \emph{wild}, i.e. contains the problem of classification
 of pairs of linear maps in a vector space; hence, a problem of classification of representations of
 any finitely generated algebra over the field $\mZ/p$ (cf. \cite[Section 5]{d0}). Namely, consider
 the case when the matrix $\Phi=\Phi(F,G)$ is of the form 
 \[
 \left[\begin{array}{cc|c}
  I & 0 & 0 \\
  0 & I & 0 \\
  \hline
  F & I & 0 \\
  G & 0 & I
  \end{array} \right]
 \]
 Here $I$ is a unit matrix of some size, $F$ and $G$ are arbitrary square matrices of the same size;
 line show the subdivision of $\Phi$ into blocks $\Phi_{ik}$ (there are only two vertical stripes). 
 One easily checks that $\Phi(F,G)$ and
 $\Phi(F',G')$ define isomorphic objects \iff there is an invertible matrix $T$ such that $F'=TFT^{-1}$
 and $G'=TGT^{-1}$. So we obtain the following result.
 
 \begin{theorem}\label{51} 
 The classification of $p$-local polyhedra in $\cS^n_p$ for $n>4(p-1)$ is a wild problem.
 \end{theorem}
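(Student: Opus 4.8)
The plan is to reduce the whole statement to the single smallest case $n=4p-3$ and then propagate wildness upward by a full-subcategory argument. Note first that $n>4(p-1)$ is the same as $n\ge4p-3$, so $n=4p-3$ is exactly the borderline value. For this base case I would rely entirely on the computation assembled above. By Theorem~\ref{12}, applied with $m=6p-5$ and with $\cA,\cB$ containing the Moore polyhedra $M^q_k$ and the shifted Chang polyhedron $C^m=C_{00}[2p-2]$, the cone map identifies the classification of objects of $\cE(\cM)/\cI$ with that of $\oS_p^{4p-3}/\cJ$; and since $\cJ^2=0$, the isomorphism classes of $\oS_p^{4p-3}$ and $\oS_p^{4p-3}/\cJ$ coincide. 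The family $\Phi(F,G)$ constructed above attaches to every pair $(F,G)$ of square matrices over $\mZ/p$ a genuine $p$-local polyhedron $C\vi(F,G)$, the cone of the corresponding map $\vi$, and the displayed reduction shows that $\Phi(F,G)$ and $\Phi(F',G')$ are isomorphic \iff $(F,G)$ and $(F',G')$ are simultaneously conjugate. This is the canonical \emph{wild} matrix problem --- simultaneous classification of a pair of linear operators on a finite-dimensional $\mZ/p$-space, equivalently of finite-dimensional modules over the free algebra $(\mZ/p)\gnr{x,y}$, which contains the representation theory of every finitely generated $\mZ/p$-algebra (cf. \cite[Section~5]{d0}). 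Hence $\cS_p^{4p-3}$ is wild.

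To pass to an arbitrary $n>4(p-1)$ I would argue by monotonicity. A direct count of cells shows that each polyhedron $C\vi(F,G)$, being the cone of a map from a wedge of the $M^q_k$ (here $q=2(n-1)=8p-8$) to a wedge of $C^m$, has cells only in the dimensions $4p-3,\,6p-5,\,8p-8,\,8p-7$, hence in at most $4p-3$ successive dimensions; thus the entire wild family lies in $\oS_p^{4p-3}\sbe\cS_p^{4p-3}$. Next, $\cS_p^{4p-3}$ is a \emph{full} subcategory of $\cS_p^n$ for every $n\ge4p-3$: by definition any polyhedron with cells in at most $4p-3\le n$ successive dimensions is an object of $\cS^n$, while all morphism groups in $\cS_p^{4p-3}$, $\cS_p^n$ and $\cS_p$ are the same groups $\hos_p$. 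Consequently, for objects $X,Y$ of $\cS_p^{4p-3}$ one has $\hom_{\cS_p^n}(X,Y)=\hom_{\cS_p^{4p-3}}(X,Y)$; in particular $X\simeq Y$ in $\cS_p^n$ \iff $X\simeq Y$ already in $\cS_p^{4p-3}$, and $\End_{\cS_p^n}(X)=\End_{\cS_p^{4p-3}}(X)$, so indecomposability is the same in both categories.

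Putting the two steps together, the assignment $(F,G)\mapsto C\vi(F,G)$ is a representation embedding into $\cS_p^{4p-3}$, and composing it with the full inclusion $\cS_p^{4p-3}\hookrightarrow\cS_p^n$ yields a functor into $\cS_p^n$ that still reflects isomorphisms and preserves indecomposability, by the previous paragraph. Therefore the classification of $p$-local polyhedra in $\cS_p^n$ contains the wild two-matrix problem as a subproblem and is itself wild for every $n>4(p-1)$.

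The genuinely new content beyond the material already assembled above is only this last monotonicity step, which is essentially formal once the cell count placing the wild family inside $\cS_p^{4p-3}$ is checked; I expect no real obstacle there. The substance of the theorem is the base case, i.e. the identification of the matrix problem attached to $n=4p-3$ with simultaneous conjugacy of a pair of matrices --- and this is exactly what the computation of $\kN_k=\hos_p(M^q_k,C^m)$, the action of $\De=\hos_p(C,C)$ on $\kN_k$, the transformation rules, and the explicit block form $\Phi(F,G)$ deliver above.
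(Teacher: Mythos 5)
Your proposal is correct and follows essentially the same route as the paper: the base case $n=4p-3$ is exactly the computation of $\kN_k=\hos_p(M^q_k,C^m)$, the transformation rules, and the wild family $\Phi(F,G)$ reduced to simultaneous conjugacy of a pair of matrices, as carried out in Section~\ref{s5}. Your only addition is to spell out the monotonicity step (the cell count placing the cones $C\vi(F,G)$ inside $\oS_p^{4p-3}$ and the full inclusion $\cS_p^{4p-3}\hookrightarrow\cS_p^n$), which the paper leaves implicit in stating the theorem for all $n>4(p-1)$ while treating only $n=4p-3$.
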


 \end{document}